\documentclass[british]{article}
\usepackage{ae,aecompl}
\usepackage[T1]{fontenc}
\usepackage[latin9]{luainputenc}
\setlength{\parskip}{\medskipamount}
\setlength{\parindent}{0pt}
\usepackage{amsmath}
\usepackage{amsthm}
\usepackage{amssymb}

\makeatletter
\theoremstyle{plain}
\newtheorem{thm}{\protect\theoremname}
  \theoremstyle{remark}
  \newtheorem{rem}[thm]{\protect\remarkname}
  \theoremstyle{plain}
  \newtheorem{prop}[thm]{\protect\propositionname}
  \theoremstyle{plain}
  \newtheorem{lem}[thm]{\protect\lemmaname}

\@ifundefined{date}{}{\date{}}
\usepackage{aecompl}


\usepackage{aecompl}

\usepackage{babel}
\addto\captionsbritish{}
  \addto\captionsbritish{\renewcommand{\lemmaname}{Lemma}}
  \addto\captionsbritish{\renewcommand{\propositionname}{Proposition}}
  \addto\captionsbritish{\renewcommand{\theoremname}{Theorem}}
  \addto\captionsenglish{}
  \addto\captionsenglish{\renewcommand{\lemmaname}{Lemma}}
  \addto\captionsenglish{\renewcommand{\propositionname}{Proposition}}
  \addto\captionsenglish{\renewcommand{\theoremname}{Theorem}}
  
  \providecommand{\lemmaname}{Lemma}
  \providecommand{\propositionname}{Proposition}
\providecommand{\theoremname}{Theorem}

\usepackage{babel}

  \providecommand{\lemmaname}{Lemma}
  \providecommand{\propositionname}{Proposition}
  \providecommand{\remarkname}{Remark}
\providecommand{\theoremname}{Theorem}

\newcommand{\IR}{\mathbb{R}}
\newcommand{\IE}{\mathbb{E}}

\usepackage{babel}
\providecommand{\lemmaname}{Lemma}
  \providecommand{\propositionname}{Proposition}
  \providecommand{\remarkname}{Remark}
\providecommand{\theoremname}{Theorem}

\makeatother

\usepackage{babel}
  \providecommand{\lemmaname}{Lemma}
  \providecommand{\propositionname}{Proposition}
  \providecommand{\remarkname}{Remark}
\providecommand{\theoremname}{Theorem}

\begin{document}

\title{Critical Regime in a Curie-Weiss Model with two Groups and Heterogeneous
Coupling}

\author{Werner Kirsch and Gabor Toth\\
 Fakult\"at f\"ur Mathematik und Informatik \\
 FernUniversit\"at Hagen, Germany}
\maketitle
\begin{abstract}
We discuss a Curie-Weiss model with two groups in the critical regime.
This is the region where the central limit theorem does not hold any
more but the mean magnetization still goes to zero as the number of
spins grows. We show that the total magnetization normalized by $N^{3/4}$
converges to a non-trivial distribution which is not Gaussian, just
as in the single-group Curie-Weiss model. 
\end{abstract}

\section{Introduction}

The Curie-Weiss model is a simple model of magnetism. In this model
the spins can take values in $\{-1,1\}$. The energy function for
spins $X=(X_{1},X_{2},\ldots,X_{N})\in\{-1,1\}^{N}$ is given by 
\begin{align}
H(X)~=~H(X_{1},\ldots,X_{N})~:=~-\frac{J}{2N}\,\big(\sum_{j=1}^{N}\,X_{j}\big)^{2}\,,
\end{align}
where $J$ is a positive real number.

The probability of a spin configuration is given by 
\begin{align}
\mathbb{P}\big(X_{1}=x_{1},\ldots,X_{N}=x_{N}\big)~:=~Z^{-1}\;e^{-H(x_{1},\ldots,x_{N})}\label{eq:Pbeta}
\end{align}
where $x_{i}\in\{-1,1\}$ and $Z$ is a normalisation constant which
depends on $N$ and $J$.

The quantity 
\begin{align}
S_{N}~=~\sum_{j=1}^{N}X_{j}
\end{align}
is called the (total) magnetisation. It is well known (see e.\,g.
Ellis \cite{Ellis} or \cite{MM}) that the Curie-Weiss model has
a phase transition at $J=1$ in the following sense 
\begin{align}
\frac{1}{N}\,S_{N}~\Longrightarrow~\frac{1}{2}\,(\delta_{-m(J)}+\delta_{m(J)})\label{eq:lln}
\end{align}
where $\Rightarrow$ denotes convergence in distribution and $\delta_{x}$
the Dirac measure in $x$.

For $J\leq1$ we have $m(J)=0$ while $m(J)>0$ for $J>1$.

Equation (\ref{eq:lln}) is a substitute for the law of large numbers
for i.i.d. random variables.

Moreover, for $J<1$ there is a central limit theorem, i.\,e. 
\begin{align}
\frac{1}{\sqrt{N}}\,S_{N}~\Longrightarrow~\mathcal{N}(0,\frac{1}{1-\beta})
\end{align}

In the `critical' case $J=1$ the correct normalization of the $S_{N}$
is $N^{3/4}$ rather than $N^{1/2}$. In fact the normalized sums
\begin{align}
\frac{1}{N^{3/4}}\,S_{N}
\end{align}
converge in distribution. The limit measure is not a normal distribution.

In this paper we consider a Curie-Weiss model with two groups of spins
denoted by $X=(X_{1},\ldots,X_{\tilde{N}_{1}})$ and $Y=(Y_{1},\ldots,Y_{\tilde{N}_{2}})$
with $X_{i},Y_{j}\in\{-1,1\}$. The total number of spins is $N=\tilde{N}_{1}+\tilde{N}_{2}$.
The interaction within the groups is given by the coupling constants
$J_{1}$ and $J_{2}$ and the interaction between spins from different
groups is $\bar{J}$. In other words, the Hamiltonian is given by:

\begin{align}
H=H(X,Y)~=~-\frac{1}{2N}\left[J_{1}\big(\sum_{j=1}^{\tilde{N}_{1}}X_{j}\big)^{2}+J_{2}\big(\sum_{j=1}^{\tilde{N}_{2}}Y_{j}\big)^{2}+2\bar{J}\sum_{i=1}^{\tilde{N}_{1}}\sum_{j=1}^{\tilde{N}_{2}}X_{i}Y_{j}\right].
\end{align}

We assume that $J_{1},J_{2}>0$, $\bar{J}\geq0$ and $J_{1}J_{2}-{\bar{J}}^{2}>0$,
so that the coupling matrix

\begin{align}
J=\left[\begin{array}{cc}
J_{1} & \bar{J}\\
\bar{J} & J_{2}
\end{array}\right]
\end{align}
is positive definite. We denote its determinant by $\Delta=J_{1}J_{2}-\bar{J}^{2}>0$.

Let us consider groups $X_{1},\ldots,X_{N_{1}}$ and $Y_{1},\ldots,Y_{N_{2}}$
(with $N_{1}\leq\tilde{N}_{1}$ and $N_{2}\leq\tilde{N}_{2}$ such
that both $N_{1}$ and $N_{2}$ go to infinity as $N$ does, more
precisely, we set 
\begin{equation}
\alpha_{1}=\lim\frac{N_{1}}{N}\qquad\qquad\alpha_{2}=\lim_{N\to\infty}\frac{N_{2}}{N}\,
\end{equation}
and assume that these limits exist and are strictly positive.

We are interested in the asymptotic behaviour of the two-dimensional
random variables 
\begin{equation}
S_{N_{1}\,N_{2}}~=~(S_{N_{1}}^{(1)},(S_{N_{2}}^{(2)})~:=~\big(\sum_{i=1}^{N_{1}}\,X_{i}\,,\,\sum_{j=1}^{N_{2}}\,Y_{j}\,\big)
\end{equation}
as $N$ goes to infinity.

In this paper we consider what we call the `critical phase', i.\,e.
the regime where 
\begin{align}
J_{1}~ & <~\frac{1}{\alpha_{1}},\label{eq:condJ1-1}\\
J_{2}~ & <~\frac{1}{\alpha_{2}},\label{eq:condJ2-1}\\
\bar{J}^{2}~ & =~(\frac{1}{\alpha_{1}}-J_{1})(\frac{1}{\alpha_{2}}-J_{2}).\label{eq:condJ3-1}
\end{align}

Note that if we use the symbol $\alpha$ for the diagonal $2\times2$
matrix with entries $\alpha_{1}$ and $\alpha_{2}$, we can formulate
these conditions equivalently in matrix form: the matrix

\[
J^{-1}-\alpha
\]
is singular and has positive diagonal entries if and only if we are
in the critical regime.

For later use we define the matrix $L$ by

\begin{align}
L=\left[\begin{array}{cc}
L_{1} & -\bar{L}\\
-\bar{L} & L_{2}
\end{array}\right]=\frac{1}{J_{1}J_{2}-\bar{J}^{2}}\left[\begin{array}{cc}
J_{2} & -\bar{J}\\
-\bar{J} & J_{1}
\end{array}\right]=J^{-1}.\label{eq:defL}
\end{align}

In the previous paper \cite{KT2} we discussed the `high temperature
regime' for the model under consideration for which \eqref{eq:condJ3-1}
is replaced with 
\begin{align}
\bar{J}^{2}~ & <~(\frac{1}{\alpha_{1}}-J_{1})(\frac{1}{\alpha_{2}}-J_{2}).\label{eq:condJ3-10}
\end{align}
For this range of parameters we proved that 
\begin{align}
\Big(\frac{1}{N_{1}}\,S_{N_{1}}^{(1)},\frac{1}{N_{2}}\,S_{N_{2}}^{(2)}\Big)~~\Longrightarrow~~\delta_{(0,0)}\,.\label{eq:lln1}
\end{align}

For the `high temperature regime' we also proved a central limit theorem,
namely 
\begin{align}
\Big(\frac{1}{\sqrt{N_{1}}}\,S_{N_{1}}^{(1)},\frac{1}{\sqrt{N_{2}}}\,S_{N_{2}}^{(2)}\Big)\label{eq:cltht}
\end{align}
converges to a normal distribution (for the details see \cite{KT2}).

In the critical regime we consider here \big(i.\,e. for \eqref{eq:condJ1-1}\textendash \eqref{eq:condJ3-1}\big)
we will prove that \eqref{eq:lln1} still holds but \eqref{eq:cltht}
has to be replaced by 
\begin{align}
T_{N}~=~\Big(\frac{1}{{N_{1}}^{3/4}}\,S_{N_{1}}^{(1)},\frac{1}{{N_{2}}^{3/4}}\,S_{N_{2}}^{(2)}\Big)\,.\label{eq:cltcrit}
\end{align}
This sequence $T_{N}$ converges in distribution but not to a normal
distribution. We compute the moments of the limiting measure in Theorem
\ref{Fluctuations}.

Now we are able to formulate our results. 
\begin{thm}
\label{LLN} Under the above assumptions, we have

\[
\big(\frac{1}{N_{1}}\sum_{i=1}^{N_{1}}X_{i},\frac{1}{N_{2}}\sum_{j=1}^{N_{2}}Y_{j}\big)~\underset{N\to\infty}{\Longrightarrow}\delta_{(0,0)}.
\]
\end{thm}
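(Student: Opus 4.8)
The plan is to use the Hubbard--Stratonovich transformation, the standard linearisation device for Curie--Weiss-type models: it trades the quadratic form in $H$ for a single auxiliary integration over $w=(w_1,w_2)\in\IR^2$, after which the claim reduces to a Laplace-type asymptotic analysis. With $L=J^{-1}$ as in \eqref{eq:defL}, $s_1=\sum_{i=1}^{\tilde N_1}X_i$, $s_2=\sum_{j=1}^{\tilde N_2}Y_j$ and $\mathbf s=(s_1,s_2)$, the Gaussian identity $e^{\frac1{2N}\mathbf s^{T}J\mathbf s}\propto\int_{\IR^2}e^{-\frac N2\,w^{T}Lw+w_1s_1+w_2s_2}\,dw$ (proportionality constant depending on $N$ only) makes the exponent linear in the spins; the sum over spin configurations then factorises over sites, and one obtains, for every bounded function $g$ of the spins,
\begin{align}
\langle g(X,Y)\rangle~=~\frac{\int_{\IR^2}e^{N\Psi_N(w)}\,\IE_{w}[g(X,Y)]\,dw}{\int_{\IR^2}e^{N\Psi_N(w)}\,dw}\,,
\end{align}
where $\langle\,\cdot\,\rangle$ is the Gibbs expectation, $\Psi_N(w)=-\frac12\,w^{T}Lw+\frac{\tilde N_1}{N}\ln\cosh w_1+\frac{\tilde N_2}{N}\ln\cosh w_2$, and under the reference measure $\IE_w$ all spins are independent with $\IE_wX_i=\tanh w_1$ (first group) and $\IE_wY_j=\tanh w_2$ (second group). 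I will assume $\tilde N_i/N\to\alpha_i$ (otherwise argue along subsequences).

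Next I would analyse the probability density $p_N(w)\propto e^{N\Psi_N(w)}$ by Laplace's method. Since $\tilde N_i/N\to\alpha_i$, $\Psi_N\to\Psi$ locally uniformly, with $\Psi(w)=-\frac12\,w^{T}Lw+\alpha_1\ln\cosh w_1+\alpha_2\ln\cosh w_2$, and the crucial point is that $w=0$ is the \emph{strict global maximiser} of $\Psi$, with $\Psi(0)=0$. This is immediate from the elementary inequality $\ln\cosh t\le t^{2}/2$, strict for $t\ne0$: it gives $\Psi(w)\le-\frac12\,w^{T}(L-\alpha)w$, and the matrix $L-\alpha=J^{-1}-\alpha$ is positive semidefinite, since in the critical regime it is (as observed in the introduction) a symmetric $2\times2$ matrix with positive diagonal entries $L_1-\alpha_1,\,L_2-\alpha_2$ and zero determinant, so its eigenvalues are $0$ and $(L_1-\alpha_1)+(L_2-\alpha_2)>0$. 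Hence $\Psi\le0$, with equality forcing $\ln\cosh w_1=w_1^{2}/2$ and $\ln\cosh w_2=w_2^{2}/2$, i.e. $w=0$. Combining this with the uniform tail bound $\Psi_N(w)\le-\frac12\lambda_{\min}(L)\,|w|^{2}+\sqrt2\,|w|$ (from $\ln\cosh t\le|t|$ and positive definiteness of $L$) and the trivial bound $\Psi_N(w)\ge-\frac12\,w^{T}Lw$ (from $\ln\cosh\ge0$, used to bound the denominator from below), a routine Laplace estimate gives $p_N(\{|w|>\delta\})\to0$ for every $\delta>0$.

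Finally I would feed this back into the mixture formula. Fix $\varepsilon>0$ and choose $\delta>0$ with $|\tanh w_i|\le\varepsilon/2$ whenever $|w|\le\delta$, and take $g=\mathbf 1\{\,|\tfrac1{N_1}\sum_{i=1}^{N_1}X_i|>\varepsilon\,\}$. Splitting the $w$-integral at $|w|=\delta$: the part over $\{|w|>\delta\}$ contributes at most $p_N(\{|w|>\delta\})\to0$; over $\{|w|\le\delta\}$, under $\IE_w$ the variables $X_1,\dots,X_{N_1}$ are i.i.d., bounded and centred at $\tanh w_1$, so Chebyshev's inequality yields $\IE_w[g]=\mathbb{P}_w\big(|\tfrac1{N_1}\sum_{i=1}^{N_1}X_i|>\varepsilon\big)\le\mathbb{P}_w\big(|\tfrac1{N_1}\sum_{i=1}^{N_1}(X_i-\tanh w_1)|>\varepsilon/2\big)\le 4/(N_1\varepsilon^{2})\to0$. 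Hence $\mathbb{P}\big(|\tfrac1{N_1}\sum_{i=1}^{N_1}X_i|>\varepsilon\big)\to0$, and the identical argument handles $\tfrac1{N_2}\sum_{j=1}^{N_2}Y_j$. Both coordinates converge to $0$ in probability, and since the limit is a point mass, the pair converges in distribution to $\delta_{(0,0)}$.

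I do not expect a serious obstacle: the only substantive step is the Laplace estimate, which is standard. The one feature distinguishing the critical from the high-temperature regime of \cite{KT2} is that the Hessian of $\Psi$ at $0$, namely $\alpha-L=-(J^{-1}-\alpha)$, is only negative \emph{semi}definite --- it has a one-dimensional kernel, precisely the degeneracy responsible for the $N^{3/4}$ normalisation in Theorem \ref{Fluctuations}. So $p_N$ concentrates at the origin only polynomially fast along that direction, and the usual Gaussian lower bound $\sim N^{-1}$ on $\int e^{N\Psi_N}$ must be replaced by a cruder one; this is harmless, since the competing mass on $\{|w|>\delta\}$ is exponentially small. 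For the law of large numbers the degeneracy is irrelevant: one only needs the maximiser of $\Psi$ to be at the origin and nowhere else, and the singularity of $J^{-1}-\alpha$ together with $\ln\cosh t\le t^2/2$ gives exactly that. (In fact Theorem \ref{LLN} also follows a posteriori from Theorem \ref{Fluctuations}, since $\tfrac1{N_i}S_{N_i}^{(i)}=N_i^{-1/4}\big(N_i^{-3/4}S_{N_i}^{(i)}\big)$ with the bracketed factor tight.)
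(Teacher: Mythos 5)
Your proof is correct, but it reaches Theorem \ref{LLN} by a genuinely different route than the paper. The paper never proves Theorem \ref{LLN} separately: it runs the moment method throughout, computing the asymptotics of every correlation $\IE(X_1\cdots X_K Y_1\cdots Y_L)\sim c\,N^{-(K+L)/4}$ via a Laplace analysis of $\mathcal{Z}_N(K,L)$ (which, because the Hessian of $F$ at the origin is degenerate, requires the fourth-order Taylor expansion and the dominated-convergence lemma), then assembling these with the combinatorial weights $w_K(r)$ into the moments of $T_N$; Theorem \ref{LLN} drops out because the moments of the $N^{-1}$-normalised sums are $N_1^{-K/4}N_2^{-L/4}$ times bounded quantities --- exactly the ``a posteriori'' observation in your last parenthesis. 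You instead use the Hubbard--Stratonovich mixture representation and need only the crude concentration $p_N(\{|w|>\delta\})\to0$ plus a conditional Chebyshev bound, which buys a much shorter and more elementary proof of the law of large numbers: no fourth-order expansion, no profile combinatorics, and the degeneracy of the Hessian is irrelevant, as you correctly note. What it does not buy is Theorem \ref{Fluctuations}, which is where the paper's heavier machinery earns its keep. Both arguments rest on the same key fact --- that the origin is the unique global minimiser of $F$ (maximiser of $\Psi=-F$) --- and your derivation of it from $\ln\cosh t\le t^2/2$ together with the positive semidefiniteness of $J^{-1}-\alpha$ (zero determinant, positive trace) is cleaner than the paper's directional-second-derivative computation. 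One small caveat you already flag yourself: the representation produces $\tilde N_\nu/N$ in front of $\ln\cosh w_\nu$, whereas the critical-regime conditions are stated in terms of $\alpha_\nu=\lim N_\nu/N$; the paper's own $F$ silently identifies the two, so your assumption $\tilde N_\nu/N\to\alpha_\nu$ is consistent with (and no worse than) the paper's setup.
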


Above '$\Longrightarrow$' denotes convergence in distribution of
the $2$-dimensional random variable on the left hand side.

If we choose as normalising factors $N_{\nu}^{\frac{3}{4}}$ instead
of $N_{\nu}$, then we obtain 
\begin{thm}
\label{Fluctuations} Under the above assumptions, the random variables

\[
(\frac{1}{N_{1}^{3/4}}\sum_{i=1}^{N_{1}}X_{i},\frac{1}{N_{2}^{3/4}}\sum_{j=1}^{N_{2}}Y_{j})
\]

converge in distribution to a measure $\mu$ (on $\IR^{2}$) with
moments

\begin{align}
 & m_{KL}~:=~\int x^{K}y^{L}\;d\mu(x,y)\\
~ & =~\left[\frac{12}{\alpha_{1}(L_{2}-\alpha_{2})^{2}+\alpha_{2}(L_{1}-\alpha_{1})^{2}}\right]^{\frac{K+L}{4}}(L_{1}-\alpha_{1})^{\frac{L}{2}}(L_{2}-\alpha_{2})^{\frac{K}{2}}\frac{\Gamma(\frac{K+L+1}{4})}{\Gamma(\frac{1}{4})}\alpha_{1}^{\frac{K}{4}}\alpha_{2}^{\frac{L}{4}}.
\end{align}
\end{thm}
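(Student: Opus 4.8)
My plan is to prove convergence in distribution by establishing convergence of all moments of $T_N$ to the claimed values $m_{KL}$ (the limiting measure has moments growing slowly enough — like $\Gamma((K+L+1)/4)$ — that it is determined by its moments, so moment convergence suffices). The workhorse is the Hubbard--Stratonovich transform: introduce a two-dimensional Gaussian vector $(Z_1,Z_2)$ with a covariance built from $J$ so that, conditionally on $(Z_1,Z_2)$, the spins decouple. Concretely, writing the quadratic form in the Hamiltonian as $\frac{1}{2N}\langle v, J v\rangle$ with $v=(S_{N_1}^{(1)},S_{N_2}^{(2)})$ (up to the part of the spins not in the sub-blocks, which contributes a harmless bounded factor), one inserts a Gaussian integral to linearise the exponent. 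After the transform and integrating out the spins, the joint moment generating function (or characteristic function) of $T_N$ can be written as a ratio of $2$-dimensional integrals $\int_{\IR^2} e^{-N\,\Phi_N(s,t)}\,(\text{polynomial in }s,t)\,ds\,dt \big/ \int_{\IR^2} e^{-N\,\Phi_N(s,t)}\,ds\,dt$, where $\Phi_N(s,t)\to\Phi(s,t)$ is an explicit function whose Hessian at its minimiser is exactly the singular matrix $J^{-1}-\alpha$.

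The next step is a Laplace/steepest-descent analysis of these integrals. Because $J^{-1}-\alpha$ is singular (condition \eqref{eq:condJ3-1}), the quadratic approximation of $\Phi$ at the critical point $(0,0)$ degenerates in one direction: along the kernel of $J^{-1}-\alpha$ the leading behaviour is quartic, while in the complementary (non-degenerate) direction it stays quadratic. I would diagonalise: choose coordinates $(u,w)$ so that $u$ spans $\ker(J^{-1}-\alpha)$ and $w$ the orthogonal direction; the $w$-integral is an ordinary Gaussian contributing an $N^{-1/2}$ factor, and the $u$-integral, after rescaling $u\sim N^{-1/4}$, converges to $\int e^{-c u^4}(\cdots)\,du$ — which is where the $\Gamma(1/4)$ and $\Gamma((K+L+1)/4)$ come from and why the normalisation is $N^{3/4}$. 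The kernel direction of $J^{-1}-\alpha$ is spanned (up to scaling) by $(\sqrt{\alpha_2}(L_1-\alpha_1)^{1/2}, \sqrt{\alpha_1}(L_2-\alpha_2)^{1/2})$ or a similar vector built from $L_\nu-\alpha_\nu$; this explains the appearance of the factors $(L_1-\alpha_1)^{L/2}(L_2-\alpha_2)^{K/2}$ and $\alpha_1^{K/4}\alpha_2^{L/4}$ in $m_{KL}$, since the two components $S_{N_1}^{(1)}/N_1^{3/4}$ and $S_{N_2}^{(2)}/N_2^{3/4}$ both become (deterministic multiples of) the single scalar random variable $U$ that survives in the limit, with $U$ having the density proportional to $e^{-c u^4}$. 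So the limit measure $\mu$ is supported on a line through the origin, and $m_{KL}$ is just $(\text{const}_1)^K (\text{const}_2)^L\,\IE[U^{K+L}]$ with $\IE[U^{n}] = (\text{quartic constant})^{-n/4}\,\Gamma(\tfrac{n+1}{4})/\Gamma(\tfrac14)$.

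The steps in order: (1) apply the Hubbard--Stratonovich transform to rewrite $\IE[\exp(\langle \lambda, v\rangle/\text{(normalisation)})]$ as a ratio of two integrals over $\IR^2$; (2) compute the limiting rate function $\Phi$, check that its unique minimiser is the origin, that $\nabla\Phi(0,0)=0$, and that $\mathrm{Hess}\,\Phi(0,0)=J^{-1}-\alpha$ is singular with positive diagonal; (3) split coordinates into the kernel direction and its complement, Taylor-expand $\Phi$ to fourth order, and verify the quartic coefficient $c$ in the kernel direction equals $\big(\alpha_1(L_2-\alpha_2)^2+\alpha_2(L_1-\alpha_1)^2\big)/(\text{something})$ so that $12/c$ is the bracket in the statement; (4) rescale ($w$ by $N^{-1/2}$, $u$ by $N^{-1/4}$) and apply dominated convergence to pass to the limit, obtaining the one-dimensional quartic integral; (5) read off $m_{KL}$ from the Gaussian-in-$w$, quartic-in-$u$ integral, using $\int_{\IR} u^{n} e^{-c u^4}du = \tfrac12 c^{-(n+1)/4}\Gamma(\tfrac{n+1}{4})$ for even $n$ and $0$ for odd $n$; (6) invoke the moment problem to upgrade moment convergence to weak convergence. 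The main obstacle I anticipate is step (4): making the Laplace asymptotics rigorous and uniform enough — one needs a clean tail bound on $e^{-N\Phi_N}$ away from the origin (using $J$ positive definite and the strict inequalities \eqref{eq:condJ1-1}--\eqref{eq:condJ2-1} to get coercivity of $\Phi$), together with control of the polynomial prefactor, so that the contributions outside a shrinking neighbourhood of $(0,0)$ are negligible and dominated convergence applies to the rescaled integrand. A secondary subtlety is bookkeeping the difference between $N_\nu$ and $\tilde N_\nu$ and between $N_\nu^{3/4}$ and $(\alpha_\nu N)^{3/4}$, which only affects constants, plus confirming the degenerate direction is not accidentally aligned with a coordinate axis (it is not, since $L_\nu-\alpha_\nu>0$ makes both components of the kernel vector nonzero), so that both marginals are genuinely non-Gaussian.
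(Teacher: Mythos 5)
Your plan is sound and its analytic core coincides with the paper's: both hinge on the Gaussian (Hubbard--Stratonovich) linearisation leading to integrals $\int e^{-NF(y_1,y_2)}(\cdots)\,\mathrm{d}^2y$ with $F(y_1,y_2)=\tfrac12 L_1y_1^2+\tfrac12L_2y_2^2+\bar Ly_1y_2-\alpha_1\ln\cosh y_1-\alpha_2\ln\cosh y_2$, whose Hessian at the unique minimiser $(0,0)$ is the singular matrix with diagonal $L_\nu-\alpha_\nu$; the fourth-order Taylor expansion, the split into the kernel direction (spanned by $(\sqrt{L_2-\alpha_2},\sqrt{L_1-\alpha_1})$, not by your $\sqrt{\alpha_\nu}$-weighted guess, though this only shifts constants) versus its complement, the rescalings $N^{1/2}$ and $N^{1/4}$, the dominated-convergence lemma, and the resulting $\Gamma(\tfrac{K+L+1}{4})/\Gamma(\tfrac14)$ with a line-supported limit are exactly what the paper does. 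Where you genuinely differ is the assembly: the paper runs the method of moments at the level of individual spin correlations, proving $\mathbb{E}(X_1\cdots X_KY_1\cdots Y_L)=\mathcal{Z}_N(K,L)/\mathcal{Z}_N(0,0)\sim c\,N^{-(K+L)/4}$ with a $\tanh^Ky_1\tanh^Ly_2$ prefactor and then sums over index profiles, showing only the all-distinct profile survives; you propose to attack the moment generating function or the moments of the normalised sums directly, Ellis--Newman style, which spares you the profile combinatorics. The one point you must not gloss over in the polynomial-prefactor version: conditionally on the auxiliary Gaussian field, $\sum_{i\leq N_1}X_i$ equals $N_1\tanh s_1$ plus fluctuations of order $\sqrt{N_1}$, and since $\tanh s_1$ lives on the scale $N^{-1/4}$ the mean part is itself only of order $N_1^{3/4}$; you therefore need the explicit estimate that the fluctuation corrections to the conditional moment are smaller by successive factors of $N^{-1/2}$ before replacing it by $(N_1\tanh s_1)^K$ --- this is precisely the role played by the paper's bound $w_K(\underline i)\,w_L(\underline j)\leq c\,N_1^{(k+K)/2}N_2^{(l+L)/2}$ in its final theorem. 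Your appeal to moment determinacy is trivially justified (the limit has a density proportional to $e^{-cv^4}$ along a line, hence is sub-Gaussian), so either assembly closes the argument.
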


\begin{rem}
For the special case $J_{1}=J_{2}=J$, $\alpha_{1}=\alpha_{2}=1/2$,
and $J+\bar{J}=2$, the moments are approximately

\[
12^{\frac{K+L}{4}}\frac{\Gamma(\frac{K+L+1}{4})}{\Gamma(\frac{1}{4})}\alpha_{1}^{\frac{K}{4}}\alpha_{2}^{\frac{L}{4}}.
\]

These moments are identical to those for the model with homogeneous
coupling matrix and $J=1$ (cf. theorem 14 in \cite{KT1}). 
\end{rem}

We prove the two theorems in section \ref{sec:Proofs}. The proof
uses concepts and notations from \cite{KT1,KT2}. For the readers
convenience, we briefly review them in section \ref{sec:Preparation}.

We mention that the Curie-Weiss model is also used to model the behaviour
of voters who have the choice to vote `Yea' (spin=1, say) or `Nay'
(spin=-1) (see \cite{Penrose}).

While finishing this paper we became aware of the papers \cite{FM}
and \cite{FC} which contain the above results as special cases. The
methods used by those authors is very different from ours.

We are grateful to Francesca Collet for drawing our attention to the
papers \cite{FM} and \cite{FC}.

We would also like to thank Matthias L\"owe and Kristina Schubert for
valuable discussion and for making their preprint \cite{LS} available
prior to publication.

\section{\label{sec:Preparation}Preparation}

In the proof of the results we employ the moment method (see e.\,g.
\cite{Breiman} or \cite{MM}). This technique was already employed
in our papers \cite{KT1} and \cite{KT2}. We use the notation introduced
there and refer the reader for details to these sources.

To use the method of moments we have to evaluate sums of the form
\begin{align}
 & \mathbb{E}\left[\left(\sum_{i=1}^{N_{1}}X_{i}\right)^{K}\left(\sum_{j=1}^{N_{2}}Y_{j}\right)^{L}\right]\nonumber \\
=~ & \sum_{i_{1},\ldots,i_{K}}\sum_{j_{1},\ldots,j_{L}}\,\mathbb{E}\Big(X_{i_{1}}\cdot X_{i_{2}}\cdot\ldots\cdot X_{i_{K}}\cdot Y_{j_{1}}\cdot Y_{j_{2}}\cdot\ldots\cdot Y_{j_{L}}\Big)\,.
\end{align}

To each $K$-tuple $(i_{1},i_{2},\ldots,i_{K})\in\{1,2,\ldots,N\}^{K}$
we associate a profile $r=(r_{1},r_{2},\ldots,r_{K})$ where $r_{m}$
counts the number of different indices $i_{\nu}$ which occur exactly
$m$ times in this product.

Observe that 
\begin{align}
\IE\Big(X_{i_{1}}\cdot X_{i_{2}}\cdot\ldots\cdot X_{i_{K}}\,\cdot\,Y_{j_{1}}\cdot Y_{j_{2}}\cdot\ldots\cdot Y_{j_{L}}\Big)
\end{align}
depends only on the profiles $r$ and $s$ of the tuples $(i_{1},i_{2},\ldots,i_{K})$
and $(j_{1},j_{2},\ldots,j_{L})$. So we may and will write 
\begin{align}
\IE\Big(X(r)\,Y(s)\Big)~=~\IE\Big(X_{i_{1}}\cdot X_{i_{2}}\cdot\ldots\cdot X_{i_{K}}\,\cdot\,Y_{j_{1}}\cdot Y_{j_{2}}\cdot\ldots\cdot Y_{j_{L}}\Big)
\end{align}
whenever $r$ and $s$ are the profiles of $(i_{1},\ldots,i_{K})$
and $(j_{1},\ldots,j_{L})$ respectively.

We denote by $\Pi^{(K)}$ the set of all profiles of $K$-tuples $(i_{1},\ldots,i_{K})\in\{1,\ldots,N\}^{K}$.

For each profile $r$ of a $K$-tuple $(i_{1},i_{2},\ldots,i_{K})\in\{1,2,\ldots,N\}^{K}$
let us denote by $w_{K}(r)$ the number of tuples in $\{1,2,\ldots,N\}^{K}$
with that profile. It turns out that 
\begin{align}
w_{K}(r)=\frac{N!}{r_{1}!r_{2}!\ldots r_{K}!(N-\sum r_{i})!}\frac{K!}{1!^{r_{1}}2!^{r_{2}}\cdots L!^{r_{K}}}.
\end{align}

With these notations we have 
\begin{align}
 & \sum_{i_{1},\ldots,i_{K}}\sum_{j_{1},\ldots,j_{L}}\,\mathbb{E}\Big(X_{i_{1}}\cdot X_{i_{2}}\cdot\ldots\cdot X_{i_{K}}\cdot Y_{j_{1}}\cdot Y_{j_{2}}\cdot\ldots\cdot Y_{j_{L}}\Big)\\
= & \sum_{r\in\Pi^{(K)}}\,\sum_{s\in\Pi^{(L)}}w_{K}(r)\,w_{L}(s)\;\IE\Big(X(r)\cdot Y(s)\Big)
\end{align}

We define the inverse matrix

\[
L=\left[\begin{array}{cc}
L_{1} & \bar{L}\\
\bar{L} & L_{2}
\end{array}\right]=\frac{1}{J_{1}J_{2}-\bar{J}^{2}}\left[\begin{array}{cc}
J_{2} & -\bar{J}\\
-\bar{J} & J_{1}
\end{array}\right]=J^{-1}.
\]

In order to calculate the correlations $\mathbb{E}\Big(X_{i_{1}}\cdot\ldots\cdot X_{i_{K}}\cdot Y_{j_{1}}\cdot\ldots\cdot Y_{j_{L}}\Big)$
in the sum above, we estimate an integral

\[
\mathcal{Z}_{N}(K,L):=\int e^{-NF(y_{1},y_{2})}\tanh^{K}y_{1}\tanh^{L}y_{2}\mathrm{d}^{2}y,
\]
where the function $F$ is given by

\[
F(y_{1},y_{2})=\frac{1}{2}L_{1}y_{1}^{2}+\frac{1}{2}L_{2}y_{2}^{2}+\bar{L}y_{1}y_{2}-\alpha_{1}\ln\cosh y_{1}-\alpha_{2}\ln\cosh y_{2}.
\]

We apply Laplace's Method to estimate the integral and therefore have
to determine the minima of $F$. For a complete explanation of the
procedure, see section 3 in \cite{KT2}.

\section{\label{sec:Proofs}Proofs}
\begin{prop}
If $L_{\nu}-\alpha_{\nu}>0$ for both groups and $(L_{1}-\alpha_{1})(L_{2}-\alpha_{2})=\bar{L}^{2}$,
then the function F defined in the last section has a unique global
minimum at the origin. 
\end{prop}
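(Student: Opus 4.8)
The plan is to exhibit $F$ as a sum of manifestly non-negative pieces which vanish simultaneously only at the origin. The first ingredient is an elementary one-variable inequality: the function $g(t):=\tfrac{1}{2}t^{2}-\ln\cosh t$ satisfies $g(t)\ge 0$, with equality if and only if $t=0$. Indeed $g(0)=0$, $g$ is even, and $g'(t)=t-\tanh t$, which is strictly positive for $t>0$ (since $g''(t)=1-\operatorname{sech}^{2}t=\tanh^{2}t>0$ for $t\neq 0$, so $g'$ is strictly increasing with $g'(0)=0$); hence $g$ is strictly increasing on $[0,\infty)$.

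Next I would rewrite $F$ by splitting off the quadratic part of each $-\alpha_{\nu}\ln\cosh y_{\nu}$. Using $\alpha_{\nu}\ln\cosh y_{\nu}=\tfrac{\alpha_{\nu}}{2}y_{\nu}^{2}-\alpha_{\nu}g(y_{\nu})$ one obtains
\[
F(y_{1},y_{2})~=~Q(y_{1},y_{2})~+~\alpha_{1}\,g(y_{1})~+~\alpha_{2}\,g(y_{2}),
\]
where $Q(y_{1},y_{2})=\tfrac{1}{2}(L_{1}-\alpha_{1})y_{1}^{2}+\tfrac{1}{2}(L_{2}-\alpha_{2})y_{2}^{2}+\bar{L}\,y_{1}y_{2}$ is the quadratic form of the symmetric matrix $M=\begin{pmatrix}L_{1}-\alpha_{1} & \bar{L}\\ \bar{L} & L_{2}-\alpha_{2}\end{pmatrix}$.

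The key observation is then that $M$ is positive semidefinite under the stated hypotheses: its two diagonal entries $L_{\nu}-\alpha_{\nu}$ are positive, so $\operatorname{tr}M>0$, and $\det M=(L_{1}-\alpha_{1})(L_{2}-\alpha_{2})-\bar{L}^{2}=0$; a $2\times2$ symmetric matrix with nonnegative trace and zero determinant has eigenvalues $0$ and $\operatorname{tr}M\ge 0$, hence is positive semidefinite. Therefore $Q\ge 0$ on all of $\IR^{2}$, and combined with $g\ge 0$ this gives $F\ge 0=F(0,0)$ everywhere, so the origin is a global minimizer. For uniqueness, if $F(y)=0$ then each of the three non-negative summands must vanish; in particular $\alpha_{1}g(y_{1})=\alpha_{2}g(y_{2})=0$, and since $\alpha_{1},\alpha_{2}>0$ and $g$ vanishes only at $0$, this forces $y_{1}=y_{2}=0$.

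I do not anticipate a real obstacle: the only point needing care is the bookkeeping in the decomposition, together with the observation that it is precisely the critical-regime conditions $L_{\nu}-\alpha_{\nu}>0$ and $(L_{1}-\alpha_{1})(L_{2}-\alpha_{2})=\bar{L}^{2}$ that make $M$ positive semidefinite. It is worth noting that the same computation works verbatim in the high-temperature regime, where $M$ is positive definite; the distinction only surfaces afterwards in the application of Laplace's method, since the Hessian of $F$ at the origin is exactly $M$ and is therefore singular here, which is what ultimately produces the $N^{3/4}$ scaling rather than $N^{1/2}$.
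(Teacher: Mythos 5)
Your proof is correct, and it takes a genuinely different route from the paper's. The paper restricts $F$ to rays $t\mapsto(tx_{0},ty_{0})$ through the origin, computes $\frac{\mathrm{d}^{2}}{\mathrm{d}t^{2}}F(tx_{0},ty_{0})$, shows it is nonnegative at $t=0$ (vanishing exactly along the degenerate direction $\sqrt{L_{1}-\alpha_{1}}\,x_{0}=\sqrt{L_{2}-\alpha_{2}}\,y_{0}$) and strictly positive for all $t>0$ in every direction, and concludes from this one-dimensional analysis that the origin is the unique global minimum. Your decomposition
\[
F~=~Q~+~\alpha_{1}\,g(y_{1})~+~\alpha_{2}\,g(y_{2}),\qquad g(t)=\tfrac{1}{2}t^{2}-\ln\cosh t,
\]
with $Q$ the quadratic form of the matrix $M$ having diagonal $L_{\nu}-\alpha_{\nu}>0$ and determinant zero, replaces that calculus with a single algebraic observation ($M$ is positive semidefinite) plus the elementary inequality $\ln\cosh t\le t^{2}/2$ with equality only at $t=0$; uniqueness then falls out for free because the strictly positive pieces $g(y_{\nu})$ alone pin down the minimizer, without any case analysis of the degenerate direction. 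A further small advantage is that your argument depends on $\bar{L}$ only through $\bar{L}^{2}$ in $\det M$, so it is insensitive to the sign of the cross term (on which the paper is not entirely consistent between its two displayed definitions of $L$). What the paper's ray computation buys instead is the explicit identification of the null direction of the Hessian, which is precisely the change of variables $u=\sqrt{L_{1}-\alpha_{1}}\,x-\sqrt{L_{2}-\alpha_{2}}\,y$ used in the subsequent Laplace-method estimate; your closing remark that the Hessian at the origin is $M$ and is singular exactly in the critical regime recovers that connection. Both arguments are complete and correct.
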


\begin{rem}
The conditions stated in the proposition are equivalent to the critical
regime. This is shown in analogous fashion to the proof of proposition
10 in \cite{KT2}. 
\end{rem}

\begin{proof}
We take derivatives of $F$ with respect to both variables

\begin{align*}
F_{1}(y_{1},y_{2}) & =L_{1}y_{1}-\bar{L}y_{2}-\alpha_{1}\tanh y_{1}=0,\\
F_{2}(y_{1},y_{2}) & =L_{2}y_{2}-\bar{L}y_{1}-\alpha_{2}\tanh y_{2}=0.
\end{align*}

One solution to this system of equations is $y_{1}=y_{2}=0$. We proceed
to show that this solution is unique. We rewrite the function $F$:

\[
F(tx_{0},ty_{0})=\frac{1}{2}L_{1}t^{2}x_{0}^{2}+\frac{1}{2}L_{2}t^{2}y_{0}^{2}-\bar{L}x_{0}y_{0}t^{2}-\alpha_{1}\ln\cosh tx_{0}-\alpha_{2}\ln\cosh ty_{0},
\]
where $(x_{0},y_{0})$ indicates the direction, $x_{0}^{2}+y_{0}^{2}=1$,
and $t$ is the distance from the origin. The first derivative of
$F$ with respect to $t$ is 0 at the origin, independently of the
direction $(x_{0},y_{0})$ .

We show that the second derivative $\frac{\mathrm{d}^{2}F(tx_{0},ty_{0})}{\mathrm{d}t^{2}}$
is positive in all directions, except for two.

\begin{align*}
\frac{\mathrm{d}^{2}F(tx_{0},ty_{0})}{\mathrm{d}t^{2}} & =L_{1}x_{0}^{2}+L_{2}y_{0}^{2}-2\bar{L}x_{0}y_{0}-\frac{\alpha_{1}x_{0}^{2}}{\cosh^{2}tx_{0}}-\frac{\alpha_{2}y_{0}^{2}}{\cosh^{2}ty_{0}}.
\end{align*}

Therefore, we have

\begin{align*}
\left.\frac{\mathrm{d}^{2}F(tx_{0},ty_{0})}{\mathrm{d}t^{2}}\right|_{t=0} & =L_{1}x_{0}^{2}+L_{2}y_{0}^{2}-2\bar{L}x_{0}y_{0}-\alpha_{1}x_{0}^{2}-\alpha_{2}y_{0}^{2}\\
 & =(\sqrt{L_{1}-\alpha_{1}}x_{0}-\sqrt{L_{2}-\alpha_{2}}y_{0})^{2}+2(\sqrt{L_{1}-\alpha_{1}}\sqrt{L_{2}-\alpha_{2}}-\bar{L})x_{0}y_{0}\\
 & =(\sqrt{L_{1}-\alpha_{1}}x_{0}-\sqrt{L_{2}-\alpha_{2}}y_{0})^{2}\\
 & \geq0,
\end{align*}
with equality if and only if

\[
\sqrt{L_{1}-\alpha_{1}}x_{0}-\sqrt{L_{2}-\alpha_{2}}y_{0}=0.
\]
Hence there are two directions $(x_{0},y_{0})$, one pointing into
quadrant one, the other into quadrant three, in which the second derivative
is 0 at the origin. In all other directions the second derivative
is strictly positive. For any direction, the second derivative is
strictly positive for all $t>0$:

\begin{align*}
\frac{\mathrm{d}^{2}F(tx_{0},ty_{0})}{\mathrm{d}t^{2}} & =L_{1}x_{0}^{2}+L_{2}y_{0}^{2}-2\bar{L}x_{0}y_{0}-\frac{\alpha_{1}x_{0}^{2}}{\cosh^{2}tx_{0}}-\frac{\alpha_{2}y_{0}^{2}}{\cosh^{2}ty_{0}}\\
 & >L_{1}x_{0}^{2}+L_{2}y_{0}^{2}-2\bar{L}x_{0}y_{0}-\alpha_{1}x_{0}^{2}-\alpha_{2}y_{0}^{2}\\
 & =(\sqrt{L_{1}-\alpha_{1}}x_{0}-\sqrt{L_{2}-\alpha_{2}}y_{0})^{2}+2(\sqrt{L_{1}-\alpha_{1}}\sqrt{L_{2}-\alpha_{2}}-\bar{L})x_{0}y_{0}\\
 & =(\sqrt{L_{1}-\alpha_{1}}x_{0}-\sqrt{L_{2}-\alpha_{2}}y_{0})^{2}\\
 & \geq0.
\end{align*}

This concludes the proof that the minimum at the origin is unique
and global. 
\end{proof}
\begin{thm}
\label{thm:correlations}Let $L_{\nu}-\alpha_{\nu}>0$ for both groups
and $(L_{1}-\alpha_{1})(L_{2}-\alpha_{2})=\bar{L}^{2}$. Then for
all $K,L\in\mathbb{N}_{0}$, the expected value $\mathbb{E}(X_{1}\cdots X_{K}Y_{1}\cdots Y_{L})$
is asymptotically given by the expression

\[
\left[\frac{12}{\alpha_{1}(L_{2}-\alpha_{2})^{2}+\alpha_{2}(L_{1}-\alpha_{1})^{2}}\right]^{\frac{K+L}{4}}(L_{1}-\alpha_{1})^{\frac{L}{2}}(L_{2}-\alpha_{2})^{\frac{K}{2}}\frac{\Gamma(\frac{K+L+1}{4})}{\Gamma(\frac{1}{4})}\frac{1}{N^{\frac{K+L}{4}}}
\]

if both $K+L$ is even and zero otherwise. 
\end{thm}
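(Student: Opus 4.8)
The plan is to reduce the computation of $\IE(X_1\cdots X_K Y_1\cdots Y_L)$ to the asymptotics of the integral $\mathcal{Z}_N(K,L)=\int e^{-NF(y_1,y_2)}\tanh^K y_1\,\tanh^L y_2\,\mathrm{d}^2y$ introduced in the Preparation section, and then to evaluate that integral by Laplace's method around the unique global minimum of $F$ at the origin, which the preceding proposition provides. Concretely, I would first recall from \cite{KT2} the representation that expresses the correlation $\IE(X(r)\,Y(s))$ — and in particular the pure-power correlation with all distinct indices — as a ratio $\mathcal{Z}_N(K,L)/\mathcal{Z}_N(0,0)$ up to the appropriate combinatorial normalization; this is the standard Hubbard--Stratonovich / Gaussian-linearization device for the Curie--Weiss Hamiltonian. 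So the whole theorem becomes a sharp Laplace-type estimate of $\mathcal{Z}_N(K,L)$.

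The main obstacle — and the heart of the proof — is that the Hessian of $F$ at the origin is \emph{degenerate}: by the proposition, the second derivative of $F$ vanishes along the one pair of directions $(x_0,y_0)$ with $\sqrt{L_1-\alpha_1}\,x_0=\sqrt{L_2-\alpha_2}\,y_0$, and is strictly positive transversally. Thus ordinary Laplace asymptotics ($\asymp N^{-1}$ per dimension) fails in one of the two directions. The plan is to make an affine change of variables $y\mapsto(u,v)$ so that $u$ runs along the flat direction and $v$ along the non-degenerate one. Along $v$ one gets the usual Gaussian contribution of order $N^{-1/2}$; along $u$ one must expand $F$ to fourth order — the cubic term vanishes by the symmetry $F(-y)=F(y)$, so $F$ restricted to the flat line behaves like $c\,u^4$ for an explicit constant $c>0$ coming from the fourth derivative of $-\alpha_1\ln\cosh-\alpha_2\ln\cosh$ along that direction. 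Rescaling $u=N^{-1/4}\tilde u$ then yields a contribution of order $N^{-1/4}$ together with the Gaussian-type integral $\int e^{-c\tilde u^4}\,\mathrm{d}\tilde u$, whose moments are exactly the source of the $\Gamma$-factors: $\int \tilde u^{m} e^{-c\tilde u^4}\,\mathrm{d}\tilde u$ is a Gamma value, and the ratio to the $m=0$ case produces $\Gamma(\tfrac{K+L+1}{4})/\Gamma(\tfrac14)$ after one checks that only the flat direction contributes to leading order in the numerator $\tanh^K y_1\tanh^L y_2\approx y_1^K y_2^L$.

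The remaining steps are bookkeeping but need care: (i) compute the explicit fourth-order coefficient $c$ along the flat direction in terms of $\alpha_1,\alpha_2,L_1,L_2$ — this is where the combination $\alpha_1(L_2-\alpha_2)^2+\alpha_2(L_1-\alpha_1)^2$ and the factor $12$ (from $\int e^{-c u^4}$ normalization, since $F^{(4)}$ of $-\alpha\ln\cosh$ contributes a $-2\alpha$ and $4!=24$, giving $12$) will emerge; (ii) identify the direction vector $(x_0,y_0)\propto(\sqrt{L_2-\alpha_2},\sqrt{L_1-\alpha_1})$ so that $\tanh^K y_1\tanh^L y_2\sim (x_0 t)^K(y_0 t)^L$ on the flat line, producing the prefactors $(L_1-\alpha_1)^{L/2}(L_2-\alpha_2)^{K/2}$ and the powers of $\alpha_1,\alpha_2$ once the Jacobian of the change of variables and the transverse Gaussian integral are accounted for; (iii) confirm that contributions away from the flat line, and the correction terms in the Taylor expansion, are of strictly smaller order in $N$, and that odd $K+L$ gives zero by the reflection symmetry $y\mapsto -y$ of $F$ combined with the odd integrand. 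Dividing the resulting asymptotics of $\mathcal{Z}_N(K,L)$ by that of $\mathcal{Z}_N(0,0)$ gives the stated formula with the $N^{-(K+L)/4}$ scaling. I expect step (i), pinning down the constant $c$ and matching it to the displayed denominator, to be the most error-prone part; everything else follows the template of \cite{KT1,KT2}.
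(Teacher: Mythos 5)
Your proposal follows essentially the same route as the paper's proof: reduce to the ratio $\mathcal{Z}_N(K,L)/\mathcal{Z}_N(0,0)$, diagonalize the degenerate quadratic form so that one coordinate is Gaussian at scale $N^{-1/2}$ and the flat direction is quartic at scale $N^{-1/4}$, justify the limit by domination, and read off the $\Gamma$-factors and the prefactors $(L_1-\alpha_1)^{L/2}(L_2-\alpha_2)^{K/2}$ from the flat direction $(x_0,y_0)\propto(\sqrt{L_2-\alpha_2},\sqrt{L_1-\alpha_1})$. The only blemishes are cosmetic (e.g.\ the fourth derivative of $-\alpha\ln\cosh$ at $0$ is $+2\alpha$, not $-2\alpha$), and the computational steps you defer are exactly the ones the paper carries out.
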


\begin{rem}
For the special case $J_{1}=J_{2}=J$, $\alpha_{1}=\alpha_{2}=1/2$,
and $J+\bar{J}=2$, the correlations are approximately

\[
12^{\frac{K+L}{4}}\frac{\Gamma(\frac{K+L+1}{4})}{\Gamma(\frac{1}{4})}\frac{1}{N^{\frac{K+L}{4}}}.
\]

These correlations are identical to those for the model with homogeneous
coupling matrix and $\beta=1$. 
\end{rem}

\begin{proof}
As the Hessian matrix is singular at the origin, we need higher order
terms in our Taylor polynomial for $F$. We calculate the third and
fourth order derivatives:

\begin{align*}
F_{11}(x,y) & =L_{1}-\frac{\alpha_{1}}{\cosh^{2}x},\\
F_{12}(x,y) & =-\bar{L},\\
F_{22}(x,y) & =L_{2}-\frac{\alpha_{2}}{\cosh^{2}y},\\
F_{111}(x,y) & =2\alpha_{1}\frac{\tanh x}{\cosh^{2}x},\\
F_{222}(x,y) & =2\alpha_{2}\frac{\tanh y}{\cosh^{2}y},\\
F_{1111}(x,y) & =\frac{2\alpha_{1}}{\cosh^{2}x}\left(\frac{1}{\cosh^{2}x}-2\tanh^{2}x\right),\\
F_{2222}(x,y) & =\frac{2\alpha_{2}}{\cosh^{2}y}\left(\frac{1}{\cosh^{2}y}-2\tanh^{2}y\right).
\end{align*}

All other third and fourth order derivatives are 0. At the origin,
we have the following values

\begin{align*}
F_{11}(0,0) & =L_{1}-\alpha_{1},\\
F_{12}(0,0) & =-\bar{L},\\
F_{22}(0,0) & =L_{2}-\alpha_{2},\\
F_{111}(0,0) & =0,\\
F_{222}(0,0) & =0,\\
F_{1111}(0,0) & =2\alpha_{1},\\
F_{2222}(0,0) & =2\alpha_{2}.
\end{align*}

Then the Taylor polynomial of order four reads

\begin{align*}
 & \frac{1}{2}\left[(L_{1}-\alpha_{1})x^{2}+(L_{2}-\alpha_{2})y^{2}-2\bar{L}xy+\frac{2\alpha_{1}}{12}x^{4}+\frac{2\alpha_{2}}{12}y^{4}\right]\\
= & \frac{1}{2}\left[(L_{1}-\alpha_{1})x^{2}+(L_{2}-\alpha_{2})y^{2}-2\sqrt{L_{1}-\alpha_{1}}\sqrt{L_{2}-\alpha_{2}}xy+\frac{\alpha_{1}}{6}x^{4}+\frac{\alpha_{2}}{6}y^{4}\right]\\
= & \frac{1}{2}\left[(\sqrt{L_{1}-\alpha_{1}}x-\sqrt{L_{2}-\alpha_{2}}y)^{2}+\frac{\alpha_{1}}{6}x^{4}+\frac{\alpha_{2}}{6}y^{4}\right].
\end{align*}

We need to estimate the integral

\[
\mathcal{Z}_{N}(K,L)=\int_{\mathbb{R}^{2}}e^{-\frac{N}{2}\left[(\sqrt{L_{1}-\alpha_{1}}x-\sqrt{L_{2}-\alpha_{2}}y)^{2}+\frac{\alpha_{1}}{6}x^{4}+\frac{\alpha_{2}}{6}y^{4}\right]}x^{K}y^{L}\mathrm{d}x\mathrm{d}y.
\]
We start by substituting

\begin{align*}
u & :=\sqrt{L_{1}-\alpha_{1}}x-\sqrt{L_{2}-\alpha_{2}}y,\\
v & :=\sqrt{L_{1}-\alpha_{1}}x+\sqrt{L_{2}-\alpha_{2}}y.
\end{align*}
Then the above integral is

\[
\int_{\mathbb{R}^{2}}e^{-\frac{N}{2}\left[u^{2}+\frac{\alpha_{1}}{6}(\frac{u+v}{2\sqrt{L_{1}-\alpha_{1}}})^{4}+\frac{\alpha_{2}}{6}(\frac{v-u}{2\sqrt{L_{2}-\alpha_{2}}})^{4}\right]}(\frac{u+v}{2\sqrt{L_{1}-\alpha_{1}}})^{K}(\frac{v-u}{2\sqrt{L_{2}-\alpha_{2}}})^{L}\mathrm{d}u\mathrm{d}v
\]
divided by the determinant of the Jacobi matrix $2\sqrt{L_{1}-\alpha_{1}}\sqrt{L_{2}-\alpha_{2}}$.
Stripping the integrand of all multiplicative constants, we obtain

\[
\int_{\mathbb{R}^{2}}e^{-\frac{N}{2}\left[u^{2}+\frac{\alpha_{1}}{2^{5}\cdot3(L_{1}-\alpha_{1})^{2}}(u+v)^{4}+\frac{\alpha_{2}}{2^{5}\cdot3(L_{2}-\alpha_{2})^{2}}(v-u)^{4}\right]}(u+v)^{K}(v-u)^{L}\mathrm{d}u\mathrm{d}v.
\]
Now we switch variables again:

\begin{align*}
u' & :=N^{1/2}u,\\
v' & :=N^{1/4}v,
\end{align*}
and obtain

\begin{align*}
\int_{\mathbb{R}^{2}}e^{-\frac{1}{2}\left[u'^{2}+\frac{\alpha_{1}}{2^{5}\cdot3(L_{1}-\alpha_{1})^{2}}(\frac{u'}{N^{1/4}}+v')^{4}+\frac{\alpha_{2}}{2^{5}\cdot3(L_{2}-\alpha_{2})^{2}}(v'-\frac{u'}{N^{1/4}})^{4}\right]}\cdot\\
\cdot(\frac{u'}{N^{1/2}}+\frac{v'}{N^{1/4}})^{K}(\frac{v'}{N^{1/4}}-\frac{u'}{N^{1/2}})^{L}\mathrm{d}u'\mathrm{d}v'
\end{align*}
times a constant equal to

\[
\frac{1}{2^{K+L+1}(L_{1}-\alpha_{1})^{\frac{K+1}{2}}(L_{2}-\alpha_{2})^{\frac{L+1}{2}}N^{\frac{3}{4}}}.
\]
We once again factor out all constants from the integrand:

\begin{align}
\int_{\mathbb{R}^{2}}e^{-\frac{1}{2}\left[u'^{2}+\frac{\alpha_{1}}{2^{5}\cdot3(L_{1}-\alpha_{1})^{2}}(\frac{u'}{N^{1/4}}+v')^{4}+\frac{\alpha_{2}}{2^{5}\cdot3(L_{2}-\alpha_{2})^{2}}(v'-\frac{u'}{N^{1/4}})^{4}\right]}\cdot\label{eq:dom_integral}
\end{align}

\[
\cdot(\frac{u'}{N^{1/4}}+v')^{K}(v'-\frac{u'}{N^{1/4}})^{L}\mathrm{d}u'\mathrm{d}v'.
\]

The above integral is asymptotically equal to

\[
\int_{\mathbb{R}^{2}}e^{-\frac{1}{2}\left[u'^{2}+\frac{v'^{4}}{2^{5}\cdot3}(\frac{\alpha_{1}}{(L_{1}-\alpha_{1})^{2}}+\frac{\alpha_{2}}{(L_{2}-\alpha_{2})^{2}})\right]}v'^{K+L}\mathrm{d}u'\mathrm{d}v',
\]
where we used the dominated convergence theorem (see lemma \ref{lem:dom_conv}).
This integral is equal to the product of

\[
\int_{\mathbb{R}}e^{-\frac{1}{2}u{}^{2}}\mathrm{d}u
\]
and

\[
\int_{\mathbb{R}}e^{-\frac{v^{4}}{2^{6}\cdot3}(\frac{\alpha_{1}}{(L_{1}-\alpha_{1})^{2}}+\frac{\alpha_{2}}{(L_{2}-\alpha_{2})^{2}})}v{}^{K+L}\mathrm{d}v.
\]
The first integral is equal to $\sqrt{2\pi}$. The second has a value
of 0 if $K+L$ is odd, otherwise it is essentially a value of the
gamma function. We set

\begin{align*}
c & :=\frac{1}{2^{6}\cdot3}(\frac{\alpha_{1}}{(L_{1}-\alpha_{1})^{2}}+\frac{\alpha_{2}}{(L_{2}-\alpha_{2})^{2}}),\\
k & :=K+L.
\end{align*}
Then the above integral is

\[
2\int_{0}^{\infty}e^{-cv^{4}}v^{k}\mathrm{d}v.
\]
We substitute

\[
t:=cv^{4}
\]
and calculate

\begin{align*}
 & \frac{2}{4c}\int_{0}^{\infty}e^{-t}(\frac{t}{c})^{\frac{k-3}{4}}\mathrm{d}t\\
= & \frac{1}{2c^{\frac{k+1}{4}}}\int_{0}^{\infty}e^{-t}t^{\frac{k-3}{4}}\mathrm{d}t\\
= & \frac{1}{2c^{\frac{k+1}{4}}}\Gamma(\frac{k+1}{4}).
\end{align*}

This shows that $\mathcal{Z}_{N}(K,L)$ is approximately equal to

\begin{align*}
\frac{\sqrt{2\pi}}{2^{K+L+1}(L_{1}-\alpha_{1})^{\frac{K+1}{2}}(L_{2}-\alpha_{2})^{\frac{L+1}{2}}N^{\frac{K+L+3}{4}}}\frac{1}{2c^{\frac{K+L+1}{4}}}\Gamma(\frac{K+L+1}{4}).
\end{align*}

The correlations $\mathbb{E}(X_{1}\cdots X_{K}Y_{1}\cdots Y_{L})$
are given by

\begin{align*}
\frac{\mathcal{Z}_{N}(K,L)}{\mathcal{Z}_{N}(0,0)} & \approx\frac{1}{2^{K+L}(L_{1}-\alpha_{1})^{\frac{K}{2}}(L_{2}-\alpha_{2})^{\frac{L}{2}}}\frac{1}{c^{\frac{K+L}{4}}}\frac{\Gamma(\frac{K+L+1}{4})}{\Gamma(\frac{1}{4})}\frac{1}{N^{\frac{K+L}{4}}}\\
 & =\frac{1}{2^{K+L}(L_{1}-\alpha_{1})^{\frac{K}{2}}(L_{2}-\alpha_{2})^{\frac{L}{2}}}\left[\frac{2^{6}\cdot3}{\frac{\alpha_{1}}{(L_{1}-\alpha_{1})^{2}}+\frac{\alpha_{2}}{(L_{2}-\alpha_{2})^{2}}}\right]^{\frac{K+L}{4}}\cdot\\
 & \cdot\frac{\Gamma(\frac{K+L+1}{4})}{\Gamma(\frac{1}{4})}\frac{1}{N^{\frac{K+L}{4}}}\\
 & =\frac{1}{(L_{1}-\alpha_{1})^{\frac{K}{2}}(L_{2}-\alpha_{2})^{\frac{L}{2}}}\left[\frac{12(L_{1}-\alpha_{1})^{2}(L_{2}-\alpha_{2})^{2}}{\alpha_{1}(L_{2}-\alpha_{2})^{2}+\alpha_{2}(L_{1}-\alpha_{1})^{2}}\right]^{\frac{K+L}{4}}\cdot\\
 & \cdot\frac{\Gamma(\frac{K+L+1}{4})}{\Gamma(\frac{1}{4})}\frac{1}{N^{\frac{K+L}{4}}}\\
 & =\left[\frac{12}{\alpha_{1}(L_{2}-\alpha_{2})^{2}+\alpha_{2}(L_{1}-\alpha_{1})^{2}}\right]^{\frac{K+L}{4}}(L_{1}-\alpha_{1})^{\frac{L}{2}}(L_{2}-\alpha_{2})^{\frac{K}{2}}\cdot\\
 & \cdot\frac{\Gamma(\frac{K+L+1}{4})}{\Gamma(\frac{1}{4})}\frac{1}{N^{\frac{K+L}{4}}}.
\end{align*}
\end{proof}
\begin{lem}
\label{lem:dom_conv}The integrand in \eqref{eq:dom_integral} is
dominated by

\begin{equation}
\sum_{k=0}^{K}\sum_{l=0}^{L}\left(\begin{array}{c}
K\\
k
\end{array}\right)\left(\begin{array}{c}
L\\
l
\end{array}\right)e^{-\frac{1}{2}\left[u^{2}+av{}^{4}\right]}|u|{}^{L+k-l}|v|{}^{K+l-k},\label{eq:dom_function}
\end{equation}
for some $a>0$, which is itself an integrable function. 
\end{lem}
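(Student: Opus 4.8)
The plan is to construct one $N$-independent integrable majorant by bounding the polynomial prefactor of \eqref{eq:dom_integral} and the exponential factor separately, and then to verify integrability of the resulting product by Tonelli's theorem.

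First I would expand the prefactor $(\tfrac{u'}{N^{1/4}}+v')^{K}(v'-\tfrac{u'}{N^{1/4}})^{L}$ by the binomial theorem, writing the first factor in powers $(u'/N^{1/4})^{k}$, $0\le k\le K$, and the second in powers $(v')^{l}$, $0\le l\le L$. Each resulting monomial then carries a coefficient $\pm\binom{K}{k}\binom{L}{l}$, the power $|u'|^{L+k-l}|v'|^{K+l-k}$, and a factor $N^{-(L+k-l)/4}$. Since $L+k-l\ge 0$ and $N\ge 1$, we have $N^{-(L+k-l)/4}\le 1$, so taking absolute values bounds the prefactor by exactly the polynomial part $\sum_{k,l}\binom{K}{k}\binom{L}{l}|u'|^{L+k-l}|v'|^{K+l-k}$ of \eqref{eq:dom_function}.

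Next I would bound the exponent of \eqref{eq:dom_integral} from below, uniformly in $N$. Set $\beta_{\nu}:=\alpha_{\nu}/(2^{5}\cdot 3\,(L_{\nu}-\alpha_{\nu})^{2})$, which is strictly positive in the critical regime since $L_{\nu}-\alpha_{\nu}>0$, and put $w:=u'/N^{1/4}$. The elementary identity
\[
(w+v')^{4}+(v'-w)^{4}=2v'^{4}+12\,v'^{2}w^{2}+2w^{4}\ \ge\ 2v'^{4}
\]
gives
\[
\beta_{1}\Big(\tfrac{u'}{N^{1/4}}+v'\Big)^{4}+\beta_{2}\Big(v'-\tfrac{u'}{N^{1/4}}\Big)^{4}\ \ge\ \min(\beta_{1},\beta_{2})\big[(w+v')^{4}+(v'-w)^{4}\big]\ \ge\ a\,v'^{4},
\]
with $a:=2\min(\beta_{1},\beta_{2})>0$ independent of $N$. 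Keeping the $u'^{2}$ term untouched, the exponential factor in \eqref{eq:dom_integral} is therefore at most $e^{-\frac{1}{2}(u'^{2}+a v'^{4})}$ for every $N\ge 1$. Multiplying the two bounds yields \eqref{eq:dom_function}, and --- crucially --- the majorant obtained is the same for all $N\ge 1$, which is exactly what is required to apply dominated convergence as $N\to\infty$ in the proof of Theorem \ref{thm:correlations}.

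It remains to check that \eqref{eq:dom_function} is integrable over $\IR^{2}$. Being a finite sum, it suffices to treat a single summand $e^{-\frac{1}{2}(u^{2}+a v^{4})}|u|^{p}|v|^{q}$, and by Tonelli's theorem this factors as the product of $\int_{\IR}e^{-u^{2}/2}|u|^{p}\,\mathrm{d}u$ and $\int_{\IR}e^{-a v^{4}/2}|v|^{q}\,\mathrm{d}v$, both finite --- an absolute moment of a Gaussian and an absolute moment against the stretched-exponential weight $e^{-a v^{4}/2}$, the latter being precisely the type of Gamma-integral evaluated in the proof of Theorem \ref{thm:correlations}. The only delicate point, which I would flag as the main (if mild) obstacle, is the uniformity in $N$ of the dominating function; this is secured by combining the trivial bound $N^{-(L+k-l)/4}\le 1$ with the quartic inequality above, so that a single integrable function controls the entire sequence of integrands.
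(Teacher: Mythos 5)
Your proof is correct, and it reaches the same dominating function \eqref{eq:dom_function} as the paper, but via a different route for the one non-trivial step: the lower bound $a v'^{4}$ on the quartic part of the exponent. The paper fixes $v$ and minimises
$g_{v}(u)=c_{1}(\tfrac{u}{N^{1/4}}+v)^{4}+c_{2}(v-\tfrac{u}{N^{1/4}})^{4}$ exactly over $u$ by calculus --- computing the critical point $u_{0}$, checking convexity, and evaluating $g_{v}(u_{0})$, which yields the optimal (largest possible) constant $a$ and, conveniently, makes clear that $a$ does not depend on $N$ because $u_{0}$ scales as $vN^{1/4}$. You instead use the algebraic identity $(w+v')^{4}+(v'-w)^{4}=2v'^{4}+12v'^{2}w^{2}+2w^{4}\ge 2v'^{4}$ together with $c_{1}A^{4}+c_{2}B^{4}\ge\min(c_{1},c_{2})(A^{4}+B^{4})$, giving $a=2\min(c_{1},c_{2})>0$ (up to the common factor $\tfrac{1}{2^{5}\cdot 3}$, which both arguments must carry along). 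Your bound is cruder when $c_{1}\ne c_{2}$ but is strictly elementary, manifestly $N$-uniform, and avoids the somewhat delicate evaluation of $g_{v}(u_{0})$; since the lemma only asks for \emph{some} $a>0$, nothing is lost. The remaining ingredients --- the binomial expansion with $N^{-(L+k-l)/4}\le 1$, and integrability of each summand of \eqref{eq:dom_function} by Tonelli as a product of a Gaussian absolute moment and a $\Gamma$-type integral against $e^{-av^{4}/2}$ --- coincide with the paper's argument (the paper leaves the integrability check implicit, so your explicit factorisation is a welcome addition). You also correctly identify the $N$-uniformity of the majorant as the point of the lemma.
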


\begin{proof}
We show the integrand in \eqref{eq:dom_integral} is smaller or equal
to the expression \eqref{eq:dom_function}. The terms $(\frac{u}{N^{1/4}}+v)^{K}(v-\frac{u}{N^{1/4}})^{L}$
can be expanded

\begin{align*}
 & \sum_{k=0}^{K}\sum_{l=0}^{L}\left(\begin{array}{c}
K\\
k
\end{array}\right)\left(\begin{array}{c}
L\\
l
\end{array}\right)(\frac{u}{N^{1/4}})^{k}v{}^{K-k}v{}^{l}(-\frac{u}{N^{1/4}})^{L-l}\\
\leq & \sum_{k=0}^{K}\sum_{l=0}^{L}\left(\begin{array}{c}
K\\
k
\end{array}\right)\left(\begin{array}{c}
L\\
l
\end{array}\right)|u|^{L+k-l}|v|^{K+l-k}.
\end{align*}
The argument of the exponential function in \eqref{eq:dom_integral}
if we ignore the term $u^{2}$ and common multiplicative factors is

\[
\frac{\alpha_{1}}{(L_{1}-\alpha_{1})^{2}}(\frac{u}{N^{1/4}}+v)^{4}+\frac{\alpha_{2}}{(L_{2}-\alpha_{2})^{2}}(v-\frac{u}{N^{1/4}})^{4}.
\]
For a given $v\in\mathbb{R}$, we define the function $g_{v}:\mathbb{R}\rightarrow\mathbb{R}$

\[
g_{v}(u)=\frac{\alpha_{1}}{(L_{1}-\alpha_{1})^{2}}(\frac{u}{N^{1/4}}+v)^{4}+\frac{\alpha_{2}}{(L_{2}-\alpha_{2})^{2}}(v-\frac{u}{N^{1/4}})^{4}.
\]

In order to find the minima of this function, we calculate the first
two derivatives

\begin{align*}
g'_{v}(u) & =\frac{4}{N^{1/4}}\left(\frac{\alpha_{1}}{(L_{1}-\alpha_{1})^{2}}(\frac{u}{N^{1/4}}+v)^{3}-\frac{\alpha_{2}}{(L_{2}-\alpha_{2})^{2}}(v-\frac{u}{N^{1/4}})^{3}\right),\\
g''_{v}(u) & =\frac{12}{N^{1/2}}\left(\frac{\alpha_{1}}{(L_{1}-\alpha_{1})^{2}}(\frac{u}{N^{1/4}}+v)^{2}+\frac{\alpha_{2}}{(L_{2}-\alpha_{2})^{2}}(v-\frac{u}{N^{1/4}})^{2}\right).
\end{align*}
The first derivative is equal to 0 if and only if

\begin{align*}
\frac{\alpha_{1}}{(L_{1}-\alpha_{1})^{2}}(\frac{u}{N^{1/4}}+v)^{3} & =\frac{\alpha_{2}}{(L_{2}-\alpha_{2})^{2}}(v-\frac{u}{N^{1/4}})^{3}\iff\\
\left(\frac{\alpha_{1}}{(L_{1}-\alpha_{1})^{2}}\right)^{\frac{1}{3}}(\frac{u}{N^{1/4}}+v) & =\left(\frac{\alpha_{2}}{(L_{2}-\alpha_{2})^{2}}\right)^{\frac{1}{3}}(v-\frac{u}{N^{1/4}})\iff\\
u_{0} & =vN^{\frac{1}{4}}\frac{\left(\frac{\alpha_{2}}{(L_{2}-\alpha_{2})^{2}}\right)^{\frac{1}{3}}-\left(\frac{\alpha_{1}}{(L_{1}-\alpha_{1})^{2}}\right)^{\frac{1}{3}}}{\left(\frac{\alpha_{1}}{(L_{1}-\alpha_{1})^{2}}\right)^{\frac{1}{3}}+\left(\frac{\alpha_{2}}{(L_{2}-\alpha_{2})^{2}}\right)^{\frac{1}{3}}}.
\end{align*}
Substituting this critical point $u_{0}$ into the second derivative
$g''_{v}$, we notice that $g''_{v}(u_{0})>0$ if $v\neq0$. Hence
in that case, $u_{0}$ is a local minimum. Since the function $g_{v}$
is strictly convex, it is also the only minimum of the function. If
$v=0$, then $u_{0}=0$ is clearly also the global and unique minimum
of $g_{v}$. This shows that the argument of the exponential function
in \eqref{eq:dom_integral} is bounded above by

\[
-\frac{1}{2}\left[u^{2}+av{}^{4}\right]
\]
if we set

\begin{align*}
a & :=\frac{\alpha_{1}}{(L_{1}-\alpha_{1})^{2}}\left(1+\frac{\left(\frac{\alpha_{2}}{(L_{2}-\alpha_{2})^{2}}\right)^{\frac{1}{3}}-\left(\frac{\alpha_{1}}{(L_{1}-\alpha_{1})^{2}}\right)^{\frac{1}{3}}}{\left(\frac{\alpha_{1}}{(L_{1}-\alpha_{1})^{2}}\right)^{\frac{1}{3}}+\left(\frac{\alpha_{2}}{(L_{2}-\alpha_{2})^{2}}\right)^{\frac{1}{3}}}\right)^{\frac{1}{4}}\\
 & +\frac{\alpha_{2}}{(L_{2}-\alpha_{2})^{2}}\left(1-\frac{\left(\frac{\alpha_{2}}{(L_{2}-\alpha_{2})^{2}}\right)^{\frac{1}{3}}-\left(\frac{\alpha_{1}}{(L_{1}-\alpha_{1})^{2}}\right)^{\frac{1}{3}}}{\left(\frac{\alpha_{1}}{(L_{1}-\alpha_{1})^{2}}\right)^{\frac{1}{3}}+\left(\frac{\alpha_{2}}{(L_{2}-\alpha_{2})^{2}}\right)^{\frac{1}{3}}}\right)^{\frac{1}{4}}\\
 & =\frac{\alpha_{1}}{(L_{1}-\alpha_{1})^{2}}\left(\frac{2\left(\frac{\alpha_{2}}{(L_{2}-\alpha_{2})^{2}}\right)^{\frac{1}{3}}}{\left(\frac{\alpha_{1}}{(L_{1}-\alpha_{1})^{2}}\right)^{\frac{1}{3}}+\left(\frac{\alpha_{2}}{(L_{2}-\alpha_{2})^{2}}\right)^{\frac{1}{3}}}\right)^{\frac{1}{4}}\\
 & +\frac{\alpha_{2}}{(L_{2}-\alpha_{2})^{2}}\left(\frac{2\left(\frac{\alpha_{1}}{(L_{1}-\alpha_{1})^{2}}\right)^{\frac{1}{3}}}{\left(\frac{\alpha_{1}}{(L_{1}-\alpha_{1})^{2}}\right)^{\frac{1}{3}}+\left(\frac{\alpha_{2}}{(L_{2}-\alpha_{2})^{2}}\right)^{\frac{1}{3}}}\right)^{\frac{1}{4}}\\
 & >0.
\end{align*}
\end{proof}
\begin{thm}
Let $L_{\nu}-\alpha_{\nu}>0$ for both groups and $(L_{1}-\alpha_{1})(L_{2}-\alpha_{2})=\bar{L}^{2}$.
Then for all $K,L=\mathbb{N}_{0}$, the moments $\mathbb{E}(\frac{1}{N_{1}^{3K/4}}\sum_{i=1}^{N_{1}}X_{i}\frac{1}{N_{2}^{3K/4}}\sum_{j=1}^{N_{2}}Y_{j})$
are asymptotically given by the expression

\[
\left[\frac{12}{\alpha_{1}(L_{2}-\alpha_{2})^{2}+\alpha_{2}(L_{1}-\alpha_{1})^{2}}\right]^{\frac{K+L}{4}}(L_{1}-\alpha_{1})^{\frac{L}{2}}(L_{2}-\alpha_{2})^{\frac{K}{2}}\frac{\Gamma(\frac{K+L+1}{4})}{\Gamma(\frac{1}{4})}\alpha_{1}^{\frac{K}{4}}\alpha_{2}^{\frac{L}{4}}
\]
if $K+L$ is even and zero otherwise. 
\end{thm}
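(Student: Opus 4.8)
The plan is to express the $(K,L)$-th moment of the unnormalised pair $S_{N_1N_2}$ through the profile expansion of Section~\ref{sec:Preparation}, feed in the single-spin correlations of Theorem~\ref{thm:correlations}, and then pin down the one profile that dominates the asymptotics; the normalisation $N_\nu^{3/4}$ is precisely what makes that dominant contribution converge.

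Concretely, I would start from
\[
\mathbb{E}\Big[\big(\textstyle\sum_{i=1}^{N_1}X_i\big)^{K}\big(\sum_{j=1}^{N_2}Y_j\big)^{L}\Big]=\sum_{r\in\Pi^{(K)}}\sum_{s\in\Pi^{(L)}}w_K(r)\,w_L(s)\,\mathbb{E}\big(X(r)\,Y(s)\big).
\]
For a profile $r=(r_1,\dots,r_K)$ put $p(r)=\sum_m r_m$ (the number of distinct indices it uses) and $\kappa(r)=\sum_{m\text{ odd}}r_m$ (how many of those indices occur an odd number of times); note $\kappa(r)\equiv K\pmod 2$ since $K=\sum_m m r_m$. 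Because $X_i^2=1$, the factor $\mathbb{E}(X(r)Y(s))$ collapses to a correlation of $\kappa(r)$ distinct $X$-spins with $\kappa(s)$ distinct $Y$-spins, i.e.\ to $\mathbb{E}(X_1\cdots X_{\kappa(r)}Y_1\cdots Y_{\kappa(s)})$ by exchangeability within each group, which Theorem~\ref{thm:correlations} evaluates: it is $0$ unless $\kappa(r)+\kappa(s)$ is even, in which case it is asymptotic to $C(\kappa(r),\kappa(s))\,N^{-(\kappa(r)+\kappa(s))/4}$, with $C$ the explicit constant there. (When $K+L$ is odd, every $\kappa(r)+\kappa(s)$ is odd, so invariance of the Gibbs measure under $(X,Y)\mapsto(-X,-Y)$ makes all terms vanish and the moment is zero, as claimed.)

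Next I would bring in the counting factors: from the formula for $w_K$ in Section~\ref{sec:Preparation}, $w_K(r)\sim c_r N_1^{\,p(r)}$ with $c_r=K!\big/\big(\prod_m r_m!\,(m!)^{r_m}\big)$, so in particular $c_r=1$ and $p(r)=K$ for the all-distinct profile $r=(K,0,\dots,0)$; likewise $w_L(s)\sim c_s N_2^{\,p(s)}$. Hence, using $N_1\sim\alpha_1N$ and $N_2\sim\alpha_2N$, the $(r,s)$-summand has order $N^{\,p(r)+p(s)-(\kappa(r)+\kappa(s))/4}$. The heart of the matter is the elementary inequality
\[
p(r)-\tfrac14\kappa(r)\;\le\;\tfrac34 K,\qquad\text{with equality iff }r=(K,0,\dots,0),
\]
which, after multiplying by $4$ and substituting $K=\sum_m m r_m$, becomes $\sum_{m\ge 2}(3m-4)r_m+\sum_{m\ge 3\text{ odd}}r_m\ge 0$ — true term by term, with equality exactly when $r_m=0$ for all $m\ge 2$. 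Consequently the term $r=(K,0,\dots,0)$, $s=(L,0,\dots,0)$ strictly dominates, and since $\Pi^{(K)}\times\Pi^{(L)}$ is finite and independent of $N$ for $N$ large, the whole sum is asymptotic to this single term, namely $N_1^{K}N_2^{L}\,C(K,L)\,N^{-(K+L)/4}$.

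Finally I would divide by $N_1^{3K/4}N_2^{3L/4}$ and let $N\to\infty$: the subdominant terms carry a strictly negative power of $N$ and vanish, while the dominant one gives
\[
\frac{N_1^{K}N_2^{L}}{N_1^{3K/4}N_2^{3L/4}N^{(K+L)/4}}\,C(K,L)\;\longrightarrow\;\alpha_1^{K/4}\alpha_2^{L/4}\,C(K,L),
\]
which is exactly the asserted expression. I expect the only genuine obstacle to be the combinatorial optimisation identifying $(K,0,\dots,0)$ and $(L,0,\dots,0)$ as the unique maximising profiles; everything else is bookkeeping (and, for deducing Theorem~\ref{Fluctuations} from this, the standard remark that these moments satisfy Carleman's condition and therefore determine the limiting measure $\mu$ uniquely).
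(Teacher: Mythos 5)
Your proposal is correct and follows essentially the same route as the paper: expand the moment over profiles, bound each summand by $w_K(r)w_L(s)$ times the correlation asymptotics of Theorem~\ref{thm:correlations}, and show that only the all-distinct profiles $r=(K,0,\dots,0)$, $s=(L,0,\dots,0)$ survive the $N_1^{3K/4}N_2^{3L/4}$ normalisation. Your inequality $p(r)-\tfrac14\kappa(r)\le\tfrac34K$ is just a repackaging of the paper's bounds $w_K(\underline{i})\le cN_1^{(k+K)/2}$ and $|\mathbb{E}(X(\underline{i},\underline{j}))|\le cN^{-(k+l)/4}$, so the two arguments coincide.
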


\begin{proof}
We calculate for any $\underline{i}\in\prod_{k}^{(K)},\underline{j}\in\prod_{l}^{(L)}$:

\begin{align*}
\left|\mathbb{E}(X(\underline{i},\underline{j}))\right| & \leq c\frac{1}{N^{\frac{k+l}{4}}},\\
w_{K}(\underline{i})w_{L}(\underline{j}) & \leq cN_{1}^{\frac{k+K}{2}}N_{2}^{\frac{l+L}{2}}.
\end{align*}

The symbol $c$ in the above inequalities stands for some constant
(not necessarily the same in both lines).

Therefore, each summand is bounded above by

\begin{align*}
\frac{1}{N_{1}^{3K/4}N_{2}^{3L/4}}w_{K}(\underline{i})w_{L}(\underline{j})\left|\mathbb{E}(X(\underline{i},\underline{j}))\right| & \leq c\frac{1}{N_{1}^{3K/4}N_{2}^{3L/4}}N_{1}^{\frac{k+K}{2}}N_{2}^{\frac{l+L}{2}}\frac{1}{N^{\frac{k+l}{4}}}\\
 & =c\alpha_{1}^{1/4}\alpha_{2}^{1/4}N_{1}^{-\frac{1}{4}(K-k)}N_{2}^{-\frac{1}{4}(L-l)},
\end{align*}

which goes to 0 as $N\rightarrow\infty$ if $K>k$ or $L>l$. The
only summand that matters asymptotically is the one where both $k=K,l=L$
hold. In this case, we have

\begin{align*}
w_{K}(\underline{i}) & \approx N_{1}^{K},\\
w_{L}(\underline{j}) & \approx N_{2}^{L}
\end{align*}

and

\[
\mathbb{E}(X(\underline{i},\underline{j}))\approx\left[\frac{12}{\alpha_{1}(L_{2}-\alpha_{2})^{2}+\alpha_{2}(L_{1}-\alpha_{1})^{2}}\right]^{\frac{K+L}{4}}(L_{1}-\alpha_{1})^{\frac{L}{2}}(L_{2}-\alpha_{2})^{\frac{K}{2}}\frac{\Gamma(\frac{K+L+1}{4})}{\Gamma(\frac{1}{4})}\frac{1}{N^{\frac{K+L}{4}}}
\]

provided that $K+L$ is even and 0 otherwise. Multiplying these and
the normalising factors, we obtain the result. 
\end{proof}

email: werner.kirsch@fernuni-hagen.de\qquad{}gabor.toth@fernuni-hagen.de 
\end{document}